 \newtheorem{theorem}{Theorem}
\newtheorem{lemma}{Lemma}
\def\N{\ensuremath{\mathbb{N}}}
\def\R{\ensuremath{\mathbb{R}}}
\def\X{\ensuremath{\mathscr{X}}}
\def\Ke{\ensuremath{K_e}}
\def\Ue{\ensuremath{U_\text{exit}}}
\def\Uc{\ensuremath{U_\text{crit}}}
\def\Be{\ensuremath{B_e}}
\def\N0{\ensuremath{N_{0,1}}}
\def\x{\ensuremath{\boldsymbol{\xi}}}
     \title{Invariant sets for QMF functions}
     \author{Adam Jonsson}
     \address{Department of Engineering Sciences and Mathematics \\Lule{\aa} University of Technology, 97187 Lule\aa, Sweden}
     \email{adam.jonsson@ltu.se}
     \thanks{This is the accepted version of an original article published by the Rocky Mountain Journal of Mathematics. % \href{https://projecteuclid.org/euclid.rmjm}{Rocky Mountain Journal of Mathematics}.  %This reprint was typeset by the author and differs from the original   in pagination and typographical detail. 
     The author wishes to express gratitude to the referee for valuable suggestions on how to improve the exposition, and to Jeff Steif and  Dick Gundy for critical comments on  an earlier version.}
     \keywords{Scaling functions; Markov processes; Invariant sets}
\begin{document}
     \begin{abstract}
    A quadrature mirror filter (QMF) function can be considered as the transition function for a Markov process on the unit interval. The QMF functions that generate scaling functions for  multiresolution analyses are then distinguished by properties of their  invariant sets. By characterizing these sets,  we answer in the affirmative a question raised by Gundy (Notices Amer. Math. Soc. 57,  1094-1104, 2010).  
     \end{abstract}
     \maketitle
     
\section{Introduction}\label{sec: intro}
The motivation for this paper comes from the study of a class of Markov processes that appear in the construction of scaling functions for  
multiresolution  analyses (MRA). For definitions and background, see  \cite{Coh90,CR90,Dau88,DGH00,Gun00,Kea72,Law90,PSW99} and, in particular, \cite{Gun10}. One way to construct a scaling function is to start with a 1-periodic function $p(\xi),\xi \in \mathbb{R}$, that satisfies
\begin{align}
p(\xi/2)+p(\xi/2+1/2)=1\text{ for every $\xi \in [0,1], \quad p(0)=1$}. 
\label{eq: qmf condition intro}
\end{align}
This condition is known as the  quadrature mirror filter  (QMF) condition. We  reserve the symbol $p$ for nonnegative, continuous 1-periodic functions that satisfy \eqref{eq: qmf condition intro}. We call them \emph{QMF functions}. 

To each $p$ we   associate a Markov process $\xi_0,\xi_1,\xi_2,\dots$  on the  interval $[0,1]$. Given $\xi_0\in [0,1]$, the process evolves according to 
\begin{align}\label{eq: transitions intro}
&\xi_{t+1}=\xi_t/2 \text{ or } \xi_t/2+1/2, \\
&\mathbb{P}_p(\xi_{t+1}=\xi_{t}/2+j/2\vert\xi_t)=p(\xi_t/2+j/2), j=0,1.
\end{align}
Whether or not a given $p$  generates a scaling function % if and only if two conditions are met 
is determined by two conditions (see \eqref{eq: condition (a)} and \eqref{eq: condition (b)} below), the second of which is often trivially satisfied. The first condition says that  
\begin{align}\label{eq: prob condition a}
\mathbb{P}_p(\xi_t \to 0\text{ or }1\vert\xi_0)=1 \text{ for Lebesgue a.e. }\xi_0\in [0,1].
\end{align}
For H\"{o}lder continuous $p$, the left-hand side of the equality in \eqref{eq: prob condition a} is a continuous function of $\xi_0$. In this case,  the equality in \eqref{eq: prob condition a} must hold for every $\xi _0\in [0,1]$ if $p$ generates a scaling function. 
However,  for some $p$ that generate scaling functions, the equality in \eqref{eq: prob condition a}  fails on a set of    measure zero. 

An example of such a $p$ is constructed in   \cite{Gun10}  starting from  $p(\xi)=\cos^2(3\pi\xi)$, a QMF function with $p(1/3)=p(2/3)=1$. That $p$ takes the value one at $\xi=1/3$ and $\xi=2/3$ means that $B=\{1/3,2/3\}$   invariant (i.e., $\mathbb{P}_p(\xi_1  \in B\vert\xi_0)=1$ for every $\xi_0\in B$), so $\mathbb{P}_p(\xi_t  \to 0\text{ or }1\vert\xi_0)=0$ if $\xi_0\in B$.  To allow  paths from initial points in the vicinity  of $B$ to converge to $0$ or $1$, the function $p$  is given sharp cusps at $1/3$ and $2/3$, with corresponding modifications near $1/6$ and $5/6$  to retain the QMF condition. Paths from initial points close to $B$ are still attracted to $B$, but  $B$ is \enquote{inaccessible}: the sequence $\xi_0,\xi_1,\xi_2,\dots$  converges to 0 or 1 with probability   one for every $\xi_0 \in B^c$. Hence,  the equality in \eqref{eq: prob condition a} holds for almost every $\xi_0 \in [0,1]$.

The set $B$ in the above example is closed and invariant under multiplication by 2 (mod 1). Since every  subset of $(0,1)$ with these properties has  measure zero by the ergodicity of the doubling map, we may ask (cf. \cite[p. 1103]{Gun10}): given a closed set $B \subset (0,1)$,  invariant under multiplication by 2 (mod 1), is there a $p$ for which  $B$ is invariant,  where  $\mathbb{P}_p(\xi_t  \to 0\text{ or }1\vert \xi_0)=1$ for   a.e. $\xi_0 \in [0,1]$?

Our objective is to answer this question by establishing the following result, whose proof provides a characterization of those subsets of $(0,1)$ that are invariant with respect to some $p$ (see \eqref{eq: disjointness condition 0} below).   
 \begin{theorem}\label{thm: main result} If $B\subset  (0,1)$ is closed, invariant under multiplication by 2 (mod 1), and invariant for some $p$, then there is a $\tilde{p}$ for which $B$ is invariant, where $\mathbb{P}_{\tilde{p}}(\xi_t  \to 0\text{ or }1\vert\xi_0)=1$ 
 for    a.e.  $\xi_0 \in [0,1]$.
\end{theorem}

The paper is organized as follows. The next   section restates the main question on a space of binary sequences. Having seen the role played by  invariant subsets of the sequence  space, we return to the unit interval in Section \ref{sec: thm1}, where Theorem \ref{thm: main result} is proved. Section \ref{sect: conclude} concludes our study.

\section{The  dynamics of sample paths}
\label{sec: sym dyn of paths}
 
The process  \eqref{eq: transitions intro} is conveniently studied  on the set  of all sequences $\x=(\dots{},x_{-1},x_0)$ of zeros and ones (see \cite{CR90,Gun07,Gun10,GJ10}), viewed as binary representations of points of $[0,1]$. We denote this set by $\X$. The correspondence between $\X$ and $[0,1]$ is given by  $\tau\colon \X\to [0,1]$, where $\tau(\x)=\sum^{\infty}_{j=0} x_{-j} 2^{-(j+1)}$. With the topology induced by the metric
\begin{equation}
\rho(\x,\x')=\begin{cases}
0 \mbox{ if $\x=\x'$},\\
2^{-\min\{|j|:x_j \neq x_j'\}}\text{ if $\x \neq \x'$},\\
\end{cases}
\end{equation}
$\X$ becomes a compact space.  

After composition with $\tau$, a QMF function defines   a continuous $g \colon \X\to [0,1]$ that satisfies 
\begin{align}\label{eq: g}
g((\x,0))+g((\x,1))=1\text{ for all }\x \in X, \quad g(\boldsymbol{0})=g(\boldsymbol{1})=1.
\end{align} 
Here $\boldsymbol{0}=(\dots,0,0)$ and $\boldsymbol{1}=(\dots,1,1)$. If we define 
\begin{align}\label{eq: ast on X}
(\dots,x_{-1},x_0)^\ast=(\dots,x_{-1},1-x_0),
\end{align} 
we can write the first condition in \eqref{eq: g}  as the requirement  that 
\begin{align}\label{eq: g2}
g(\x)+g(\x^\ast)=1\text{ for all }\x \in X.
\end{align} 
Let $\x_0,\x_1,\x_2,\dots$ be the Markov process on $\X$ that goes from $\x_t$ to $(\x_t,j)$ with probability $g((\x_t,j)),j=0,1$, and let  $d\x$ denote  the infinite product of normalized counting measure on $\{0,1\}$. 
Then  \eqref{eq: prob condition a} is equivalent to the condition that 
\begin{align}
\mathbb{P}_{g}(\x_t \to \boldsymbol{0} \text{ or }\boldsymbol{1}\vert\x_0)=1\text{ for $d\x$-almost every 
$\x_0 \in X$.} 
\label{eq: g condition a} 
\end{align}

Before we describe the structure of what Gundy \cite{Gun10} refers to as 
\enquote{inaccessible} invariant sets  we discuss an example %subshifts of finite type studied in 
from \cite[Sect. 13]{Gun07}. For $n\geq 2$, let $K(n)$ be the set of all $\x\in \X$ that do not contain a string (or word) of $n$ consecutive zeros, or a string of $n$ consecutive ones. Then 
\begin{align}\label{eq: K(2)}
K(2) =\{(\dots{},1,0,1,0) \text{ and } (\dots{},0,1,0,1)\}=\tau^{-1}(B),
\end{align} 
where  $B=\{1/3,2/3\}$ is the set discussed  in the introduction. For every $n\geq 2$, we have that $K(n)$ is a closed shift-invariant proper subset of $\X$ (a subshift). Such sets have measure  zero by the ergodicity of the shift  with respect to $d\x$. 

Suppose that we have defined $g$ so that $g$ is continuous and such that $K:=K(3)$ is  invariant, i.e., $\mathbb{P}_{g}(\x_1 \in K\vert\x_0)=1$ for every $\x_0 \in K$. The last condition is met if and only if $g(\x)=0$ for every  $\x \in \Ke$, where $$\Ke:=\{\x  \in K^c\colon (\dots,x_{-2},x_{-1})\in K\}$$ is the set of \enquote{points of exit} from $K$ \cite{Jon17}. It is possible  to define $g$ in such a way that $g$ has no zeros outside $\Ke$ besides the zeros at $\boldsymbol{0}^\ast$ and $\boldsymbol{1}^\ast$, which are required for $g(\boldsymbol{0})=g(\boldsymbol{1})=1$ \cite{Kri06}. To prevent sample paths from initial points in the complement of $K$ from converging to $K$, we modify $g$ so that  
\begin{align}
\Ue{:=} \{\x \in \X \colon (x_{-2},x_{-1},x_0)=(0,0,0) \text{ or }(1,1,1)\}
\end{align}
is visited infinitely often. (If $\x_t \in \Ue$, then  $\rho(\x_t,K)\geq 2^{-3}$,  so   paths that visit $\Ue$ infinitely often do not converge to $K$.) By Levy's conditional Borel-Cantelli Lemma (see \cite{Che78} or  \cite[Lemma 4.1]{Gun07}), we  have $\x_t \in \Ue$ for infinitely many values of $t \geq 1$, $\mathbb{P}_g(\hspace{0.5 mm}.\hspace{0.5 mm}\vert\x_0)$-a.s., if 
\begin{align}
\sum_{t=0}^\infty \mathbb{P}_g(\x_{t+1}\in \Ue|\x_{t})=+\infty, \hspace{1 mm} \mathbb{P}_g(\hspace{0.5 mm}.\hspace{0.5 mm}\vert\x_0)\text{-a.s.}
\end{align}
The words $(0,0)$ and $(1,1)$  are \enquote{critical} in the sense that if one of these words appear as the initial word in $\x_t$,  then $\Ue$ can be reached in one step. By our assumptions on $g$, the probability to reach 
\begin{align}\label{eq: sum is infinite}
\Uc{:=} \{\x\in \X \colon (x_{-1},x_0)=(0,0) \text{ or }(1,1)\}
\end{align}
 in at most two steps is positive for every $\x_0 \in X$. (If $\x_0\in \Uc$, no steps have to be taken. If $\x_0 \in K$, then either $(\x_0,0)$ and $(\x_0,0,0)$ are both in $K$, or $(\x_0,1)$ and $(\x_0,1,1)$ are both in $K$. Since $g$ is strictly positive on $K$, we can then reach $\Uc$ in two steps. Finally, if $\x_0 \in (K \cup \Uc)^c$, then neither $(\x_0,0)$ nor $(\x_0,1)$ is in $\Ke \cup \{\boldsymbol{0}^\ast,\boldsymbol{1}^\ast\}$, so both transitions have positive probability. Since $(\x_0,1)\in \Uc$ if $(\x_0,0)\notin \Uc$,  we can then reach $\Uc$ in one step.) The strictly positive finite-step transition probability is a continuous function of $\x_0$, so it is bounded away from zero. By the Renewal Theorem, we can find $\beta>0$, not depending on $\x_0 \in X$, such that the   recurrence times $t_1,t_2,\dots{}$ for critical words (i.e., the times when $\x_t \in \Uc$) satisfy $t_j \leq \beta j$, $\mathbb{P}_g(\hspace{0.5 mm}.\hspace{0.5 mm}\vert\x_0)$-a.s. Setting $g=|\log_2\rho(\x,\Ke)|^{-1}$ on $\Ue \backslash \Ke$, with a corresponding modification on $\Ue^\ast \backslash (\Ke)^\ast$, we get 
$$\mathbb{P}_g(\x_{t_j+1}\in \Ue\vert\x_{t_j}) \geq \frac{1}{l+t_j}\geq \frac{1}{l+\beta j},$$
where $l$ is the integer with $\rho(\x_0,K)=2^{-l}$.  (Here we have used the fact that  $\rho(\x_0,K)=2^{-l}$ implies $\rho(\x_t,\Ke) \geq 2^{-(t+l)}$: the initial word  in $\x_t$ of length $t+l$ cannot be the initial word of a point of   $\Ke$ if the initial word in $\x_0$ of length $l$ does not appear in a point of  $K$.) 
Because \eqref{eq: sum is infinite} holds,  $\Ue$ is visited infinitely often. If we set $g \equiv 1$ on a neighbourhood of $\{\boldsymbol{0},\boldsymbol{1}\}$, then $\mathbb{P}_{g}(\x_t \to \boldsymbol{0} \text{ or }\boldsymbol{1}\vert\x_0)$ is positive for every $\x_0 \in \Ue$. We then obtain that    the equality in \eqref{eq: g condition a} holds for all  $\x_0 \in K^c$, hence almost everywhere. 

The above construction  relies (only) on the assumption that $K$ is a subshift of finite type (see \cite[Def. 2.1.1]{LM95}). If $K$ is a  $g$-invariant subshift that is not of finite type, then $g$ must take the value zero at some point of $K$  \cite{Kri06}. (The frontier of $\Ke$ is a non-empty subset of $K$ if $K$ is not of finite type \cite{Jon17}. Since $g$ must vanish on $\overline{\Ke}$ if $g$ is continuous and $K$ is invariant,  we must then have $g(\x)=0$  for certain $\x \in K$.) This may leave us without a lower bound on the probability to encounter a critical word in any number of steps.  However, as long as the zeros of $g$ are contained in $\overline{\Ke} \cup (\Ue)^\ast$,  the set 
$\{\boldsymbol{0},\boldsymbol{1}\}$ remains accessible from any $\x_0 \in K^c$ in the sense that $\mathbb{P}_g(\rho(\x_k,\{\boldsymbol{0},\boldsymbol{1}\}) \leq 2^{-k}\vert\x_0)>0$ for every $k \geq 1$.  
Consider therefore a sequence of (dependent) trials, where trial $n\geq 0$ consists of the attempt to reach 
\begin{align}
U_{0,1}{:=} \{\x \in X\colon (x_{-k},\dots,x_0)= 
(\underbrace{0,0,....,0}_\text{$k+1$ zeros }) \text{ or }(\underbrace{1,1,....,1}_\text{$k+1$ ones }) 
\}
\end{align}
by $k$ consecutive steps towards either $\boldsymbol{0}$ or $\boldsymbol{1}$, depending on whether the initial symbol in $\x_{nk}$ is $0$ or $1$. For $k$ so large that $U_{0,1}$ is disjoint from $K$ and with $g(\x)=|\log_2\rho(\x,\overline{\Ke})|^{-1/k}$ on a neighbourhood  of $\overline{\Ke}$, we  obtain (below), for some $\lambda'>0$ and all $n \geq 1$, that \begin{align}
\mathbb{P}_g(\x_{nk+k} \in U_{0,1}|\x_{nk}) \geq \frac{\lambda'}{l+nk+k}, \text{ where $l=|\log_2\rho(\x_0,K)|$.}
\label{tp}
\end{align}
Setting $g \equiv 1$ on $U_{0,1}$ achieves \eqref{eq: g condition a} since $U_{0,1}$ is visited infinitely often if $\x_0 \in K^c$, again by Borel-Cantelli.  

A construction of the second type is  possible whenever $K \subset X \backslash \{\boldsymbol{0},\boldsymbol{1}\}$ satisfies 
\begin{align}
\overline{\Ke} \cap (\overline{\Ke})^\ast=\emptyset.
\end{align}
This condition is necessary if we  require that $g$  be continuous,  for  $g$-invariance then implies that $g(\x)=0$ for all $\x \in \overline{\Ke}$ (the closure of $\Ke$) and, hence, that $g(\x)=1$ for all $\x \in (\overline{\Ke})^\ast$ (cf. \cite{Kri06,Jon17}). The   construction does not  answer the question from the introduction, however, as it does not provide   a continuous $p(\xi), \xi \in \R$. To answer the question that we started out with, we  return to the unit interval.

 \section{Proving Theorem \ref{thm: main result}} \label{sec: thm1}
 \subsection{Definitions}
When we say that $B\subset (0,1)$ is invariant  under multiplication by 2 (mod 1), we mean that if $B$ is considered as a subset of the  circle $[0,1)$, then $B=\theta(B)$, where $\theta(\xi){:=} 
2\xi$ (mod 1).  

 The map $\xi\mapsto \xi^\ast=\xi+1/2$ (mod 1),  which is unambiguously defined on $[0,1)$, corresponds to the map in \eqref{eq: ast on X}.  We define $\xi^\ast$ for all $\xi \in [0,1]$ by  
\begin{align}\label{def: ast}
\xi^\ast{=}\begin{cases}
\xi+1/2\text{ if } \xi \in [0,1/2],\\
\xi-1/2 \text{ if } \xi \in (1/2,1].
\end{cases}
\end{align}
The first condition in \eqref{eq: qmf condition intro}  then says that
\begin{align}\label{eq: 5th general qmf condition}
{p}(\xi)+{p}(\xi^\ast)=1\text{ for every $\xi \in [0,1]$}. 
\end{align}
For $E\subset [0,1]$, we let $E^\ast=\{\xi^\ast\colon \xi\in E\}$. Finally, the distance between $\xi \in [0,1]$ and $E$ is given by
\begin{align}
\label{def: distance function}
d_{E}(\xi)&=\inf_{\xi'\in E}|\xi-\xi'|.
\end{align}   

\subsection{The structure of invariant sets} 
A  set $B\subset [0,1]$ is invariant for the process \eqref{eq: transitions intro} if and only if $p(\xi)=0$  for every $\xi\in \Be$, where 
\begin{align}
\Be&=\{\xi\in [0,1] \backslash B\colon \xi=\xi'/2+j/2\text{ for some $\xi'\in B, j\in\{0,1\}$}\}. 
\label{eq: Be}
\end{align}
(If we identify 0 and 1,   we can write $\Be=\{\xi \in [0,1) \backslash B\colon  \theta(\xi) \in B\}$.) If this condition holds, then we have $p(\xi)=0$ for every $\xi \in \overline{\Be}$ and, consequently, $p(\xi)=1$ for every $\xi \in (\overline{\Be})^\ast$, so that 
 \begin{align}
\overline{\Be} \cap \overline{(\Be})^\ast=\emptyset.
\label{eq: disjointness condition 0}
\end{align}  
The proof  of Theorem \ref{thm: main result} shows that every closed ${\theta}$-invariant $B\subset (0,1)$ that satisfies \eqref{eq: disjointness condition 0} is invariant for some $p$. 

\subsection{Proof of Theorem \ref{thm: main result}} The proof  of Theorem \ref{thm: main result} goes in two steps. Given a    closed  ${\theta}$-invariant $B\subset (0,1)$ that satisfies \eqref{eq: disjointness condition 0},  we first   construct  $p$ such that  $B$ is invariant. We then  verify that $p$ satisfies   \eqref{eq: prob condition a}. 

\subsection*{Step 1:  Construction}  Our construction relies on the following result. 
\begin{lemma}\label{lemma: N and B} Suppose that $B\subset (0,1)$ is closed and $\theta$-invariant.  

\textbf{(a)} $\{0,1/2\} \cap (B\cup \overline{\Be} \cup (\overline{\Be})^\ast)=\emptyset$. 

\textbf{(b)} If $B$ satisfies \eqref{eq: disjointness condition 0},  there is a closed  $N_e\subset [0,1]$ such that

(i) $N_e$ contains $\overline{\Be}$ is in its interior,   

(ii) $N_e \cap N_e^\ast=\emptyset$,

(iii) $\{0,1/2,1\} \cap (N_e\cup N_e^\ast)=\emptyset$.

\end{lemma} 
\begin{proof}  
\textbf{(a)} That $B$ is ${\theta}$-invariant means that $1/2\notin B$.  So we can find $\varepsilon>0$ such that $B\subset C_\varepsilon:=(\varepsilon,1/2-\varepsilon)\cup (1/2+\varepsilon,1-\varepsilon)$. Then we have $\Be\subset C_\varepsilon$. It follows that $\{0,1/2\} \cap (B\cup \overline{\Be} \cup (\overline{\Be})^\ast)=\emptyset$.  

\textbf{(b)} That $B$  satisfies \eqref{eq: disjointness condition 0} means that we can take  $\delta >0$ so that $|\xi-\xi'|>\delta$ if $\xi \in \overline{\Be}$ and $\xi'\in (\overline{\Be})^\ast$. We can cover  the (compact) set $\overline{\Be}$ by a finite union of closed intervals whose lengths do not exceed $\delta/3$ and that each contain a point of $\overline{\Be}$ in its interior. If we let $N_e$ be such a union, then $N_e$ contains $\overline{\Be}$   in its interior. The set $N_e^\ast$ is a finite union of closed  intervals whose lengths do not exceed $\delta/3$ and that each contain  a point of  $(\overline{\Be})^\ast$   in its interior.  Since $\{0,1/2,1\} \cap (\overline{\Be} \cup (\overline{\Be})^\ast)=\emptyset$,  we have $\{0,1/2,1\} \cap (N_e\cup N_e^\ast)=\emptyset$ if we take the intervals that define $N_e$  sufficiently short.   Our choice of $\delta$ gives  $N_e\cap N_e^\ast=\emptyset$.  
 \end{proof}

Now, given a closed  $\theta$-invariant $B\subset (0,1)$ that  satisfies \eqref{eq: disjointness condition 0}, let  $N_e$ be as in Lemma \ref{lemma: N and B}. Choose $\varepsilon>0$  so small   that $N_e\cup (N_e)^\ast$ is disjoint from $$\N0{:=}[0,\varepsilon]\cup [1-\varepsilon,1].$$ 
Then  $N_e\cup (N_e)^\ast$ is also disjoint from  $$N_{\frac{1}{2}}{:=}[1/2-\varepsilon,1/2+\varepsilon]=\N0^\ast.$$
 Fix a positive integer  $k$  with $2^{-k}<\varepsilon$. (This choice of $k$ will ensure that  starting from any $\xi_0 \in [0,1]$ and using the transitions  \eqref{eq: transitions intro},  we  can reach $\N0$ by $k$ consecutive  steps towards 0, or by $k$ consecutive  steps towards 1.)  Define 
\begin{align}\label{def: p}
p(\xi)=\begin{cases}
|\log_2(d_{\Be}(\xi))|^{-1/k} \mbox{ if } \xi \in N_e \backslash \overline{\Be},\\
0 \text{ if } \xi \in \overline{\Be},\\
0 \text{ if } \xi \in N_{\frac{1}{2}}.
\end{cases}
\end{align}
For $\xi\in (N_e)^\ast\cup \N0$, let $p(\xi)=1-p(\xi^\ast)$. Now $p$ is defined on 
\[
N{:=} N_e \cup (N_e)^\ast \cup N_{\frac{1}{2}}\cup \N0
\]
 and the equality in  \eqref{eq: 5th general qmf condition} holds if $\xi\in N$. Extend $p$ to $[0,1/2]\backslash N$  continuously in such a way that   $0<p(\xi)<1$ for all $\xi \in [0,1/2]\backslash N$.  If we   set  $p(\xi)=1-p(\xi^\ast)$ for $\xi \in [1/2,1]\backslash N$ and extend periodically,  then $p$ is a QMF function with  $\{\xi\in [0,1]\colon p(\xi)=0\}= \overline{\Be}\cup N_{\frac{1}{2}}$. In particular,  $B$ is invariant for $p$. 
\newline

\noindent \textbf{Step 2: Condition  \eqref{eq: prob condition a}.} The verification of  \eqref{eq: prob condition a} uses  \eqref{eq: inequality Be t steps} below, which gives an estimate on the speed at which sample paths can approach $\Be$. We take the sample space for the process \eqref{eq: transitions intro} to be the set $\{0,1\}^\mathbb{N}$  of all   binary sequences $x^+=(x_1,x_2,\dots{})$, each $x_i \in \{0,1\}$, and define the sample path $\xi_{t}(x^+)$  from a fixed $\xi_0 \in [0,1]$    recursively via
\begin{align}
\xi_{t}=\xi_{t-1}/2+x_{t}/2, t \geq 1.
\label{eq: recursion1}
\end{align}
Note that the estimates in \eqref{eq: inequality B t steps} and \eqref{eq: inequality Be t steps} below do not involve $p$. 
 \begin{lemma}\label{lemma: xit approaches B and Be} Let $B\subset (0,1)$ be closed and ${\theta}$-invariant,  and let $\xi_0 \in B^c$. There is a constant $\alpha=\alpha(\xi_0)>0$ such  that for any sample path $\xi_t=\xi_t(x^+),t\geq 0,$ from $\xi_0$,   
\begin{align}\label{eq: inequality B t steps}
d_{B}(\xi_t) \geq  \alpha 2^{-t}\text{ for all }t\geq 0,\\
\label{eq: inequality Be t steps}
d_{\Be}(\xi_t) \geq \alpha 2^{-t}\text{ for all }t\geq 1.
\end{align} 
\end{lemma}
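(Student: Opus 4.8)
The key formula is the explicit expression for sample paths, $\xi_t(x^+) = (\xi_0 + \sum_{i=1}^t x_i 2^{i-1})/2^t$, from~(\ref{eq: xit 0}). Since $B$ is closed and $\theta$-invariant, $\theta^t(\xi_t) \in B$ would be required for $\xi_t \in B$; more precisely, $\theta^t$ maps $\xi_t(x^+)$ back to $\xi_0$, so $\xi_t$ lies in the same ``dyadic fibre'' over $\xi_0$ as the $t$-fold preimages under $\theta$. The plan for~(\ref{eq: inequality B t steps}) is to exploit that $\theta$ is locally expanding by a factor $2$: if $\eta$ is a point whose $t$-th iterate under $\theta$ is close to $\xi_0 \in B^c$, then $\eta$ itself cannot be too close to $B$ because $B$ is $\theta$-invariant. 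Concretely, I would set $\alpha = \alpha(\xi_0)$ in terms of $d_B(\xi_0)$ — say $\alpha = d_B(\xi_0)/C$ for an absolute constant — and argue by contradiction: if $d_B(\xi_t) < \alpha 2^{-t}$, pick $\beta \in B$ with $|\xi_t - \beta| < \alpha 2^{-t}$; apply $\theta^t$. On the dyadic interval of length $2^{-t}$ containing $\xi_t$ (chosen so that $\theta^t$ is an affine bijection with slope $2^t$ onto $[0,1)$ — here one must be mildly careful about which branch, and about binary-rational boundary points, but by Lemma~\ref{lemma: properties of B}\ref{item: none binary rationals} the set $B$ avoids binary rationals, so $\xi_0$ and the relevant endpoints can be separated), $\theta^t(\beta)$ is within $\alpha$ of $\theta^t(\xi_t) = \xi_0$, and $\theta^t(\beta) \in B$ by invariance, contradicting $d_B(\xi_0) \geq C\alpha > \alpha$ once $C > 1$.

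For~(\ref{eq: inequality Be t steps}), the plan is to reduce it to~(\ref{eq: inequality B t steps}). By Lemma~\ref{lemma: properties of B}\ref{item: part a lemma} and the surrounding discussion, $B^c_e \subset B^c$, and each point of $B^c_e$ is a one-step preimage of $B$ under $\theta$; more usefully, $\theta(B^c_e) \subset B$ (by definition $B^c_e = \theta^{-1}(B) \cap B^c$ when $B \subset [0,1)$, from~(\ref{def: Be})). So if $d_{B^c_e}(\xi_t) < \alpha 2^{-t}$ for some $t \geq 1$, pick $\zeta \in B^c_e$ with $|\xi_t - \zeta| < \alpha 2^{-t}$, and look at $\xi_{t-1}$: since $\xi_t = \xi_{t-1}/2 + x_t/2$, we have $\theta(\xi_t) = \xi_{t-1}$, and similarly $\theta(\zeta) \in B$ with $|\theta(\xi_t) - \theta(\zeta)| \leq 2|\xi_t - \zeta| < \alpha 2^{-(t-1)}$ (again taking $\theta$ affine with slope $2$ on a short enough interval, using that binary rationals are avoided). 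Hence $d_B(\xi_{t-1}) < \alpha 2^{-(t-1)}$, contradicting~(\ref{eq: inequality B t steps}) at time $t-1$. This gives~(\ref{eq: inequality Be t steps}) with the same $\alpha$, after possibly shrinking $\alpha$ by a further factor of $2$ to absorb the expansion in the last step.

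The main obstacle I anticipate is not the expansion estimate itself but bookkeeping at binary-rational points: $\theta$ is only piecewise affine, the map $\xi \mapsto \xi^\ast$ and the branches of $\theta^{-t}$ behave badly at $k/2^n$, and the path formula~(\ref{eq: xit 0}) can land $\xi_t$ exactly on a dyadic endpoint. The resolution is to use Lemma~\ref{lemma: properties of B}\ref{item: none binary rationals} decisively: $B$, $\overline{B^c_e}$ and $\overline{B_b}$ contain no binary rationals, so there is a fixed gap $\delta_0 > 0$ between $\xi_0$ (more generally between $B$) and the dyadic grid at every scale relevant to the contradiction, allowing one to choose the dyadic interval around $\xi_t$ on which $\theta^t$ is genuinely an affine bijection with all the comparison points ($\xi_t$, $\beta$ or $\zeta$) in its interior. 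Once that is set up, both inequalities follow from a single application of the expansion estimate, with $\alpha(\xi_0)$ taken to be a small multiple of $\min\{d_B(\xi_0), \delta_0\}$.
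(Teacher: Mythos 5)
Your overall strategy --- push $\xi_t$ and a nearby point $\beta\in B$ forward by $\theta^t$, use that $\theta^t$ is affine with slope $2^t$ on each interval $[k/2^t,(k+1)/2^t)$, that $\theta^t(\xi_t)=\xi_0$, and that $\theta^t(\beta)\in B$ by invariance --- is viable, and it differs from the paper's argument, which runs forward by induction: there one writes each $\xi\in B$ (resp.\ $\xi\in B^c_e$) as $\xi'/2+j/2$ with $\xi'\in B$, compares with $\xi_t=\xi_{t-1}/2+x_t/2$, and disposes of the mismatched branch $j\neq x_t$ by a fixed buffer $\delta$ around $1/2$ (possible since $1/2\notin B\cup\overline{B^c_e}$), taking $\alpha=\min(\delta,d_B(\xi_0))$; no bookkeeping of the branches of $\theta^t$ is ever needed. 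Your reduction of (\ref{eq: inequality Be t steps}) to (\ref{eq: inequality B t steps}) via a single application of $\theta$, using that $\overline{B^c_e}$ is bounded away from $1/2$, is sound and is essentially what the paper does in its second step.

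The genuine gap is in your resolution of the branch-separation issue, which is the crux of the expansion argument. There is no ``fixed gap $\delta_0>0$ between $\xi_0$ (or $B$) and the dyadic grid at every scale'': binary rationals are dense, so the distance from any nonempty set to the collection of all points $k/2^t$, $t\geq 1$, is zero. Lemma \ref{lemma: properties of B}\ref{item: none binary rationals} only gives, for each \emph{fixed} binary rational, a positive distance to $B$, with no uniformity in the scale; and the absence of binary rationals from a closed set does not by itself give a separation of order $2^{-t}$ from the generation-$t$ grid (a one-point set such as $\{1/4+10^{-100}/3\}$ contains no binary rational yet lies far closer to $1/4$ than $2^{-2}$), so the property you need cannot follow from that lemma alone. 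What your contradiction actually requires is that $\xi_t$ and $\beta$ (resp.\ $\zeta$) lie in the \emph{same} interval $[k/2^t,(k+1)/2^t)$, i.e.\ a separation of size at least $\alpha 2^{-t}$ from the scale-$t$ grid, and this must be proved. Two repairs work: (i) note that $\xi_t=(\xi_0+k)/2^t$ has distance exactly $2^{-t}\min(\xi_0,1-\xi_0)$ to the adjacent grid points, so it suffices to take $\alpha\leq\min(\xi_0,1-\xi_0)$ and to treat $\xi_0\in\{0,1\}$ separately; or (ii) prove the scaled gap $d_B(k/2^t)\geq\delta 2^{-t}$ with $\delta{:=}\inf_{\beta\in B}\min(\beta,1-\beta)>0$, by observing that if some $\beta\in B$ were within $\delta 2^{-t}$ of a point $k/2^t$, then $\theta^t(\beta)\in B$ (using $\theta(B)=B$) would lie within $\delta$ of $0$ or $1$, a contradiction --- note that this uses the $\theta$-invariance of $B$, not merely the absence of binary rationals. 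With either repair, and with $\alpha$ a small multiple of $\min\{d_B(\xi_0),\delta,d_{\{1/2\}}$-type constants$\}$, your argument goes through; as written, the key step fails.
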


\begin{proof}  Since $1/2\notin B\cup \overline{\Be}$ (Lemma \ref{lemma: N and B}(a)), we can pick $\delta\in (0,1)$ such that $|\xi-1/2|>\delta$ for all  $\xi\in B\cup \overline{\Be}$. Let  $\xi_0 \in B^c$ be given and consider the sample path $\xi_t=\xi_t(x^+)$   defined by  $x^+\in \{0,1\}^\mathbb{N}$ and the recursion \eqref{eq: recursion1}. We   show that \eqref{eq: inequality B t steps} holds with  $$\alpha{:=}\min(\delta,d_B(\xi_0)).$$  
This choice of $\alpha>0$ gives  $d_{B}(\xi_t) \geq \alpha 2^{-t}$ if $t=0$. So it is enough to show that  $d_{B}(\xi_{t-1}) \geq \alpha 2^{-(t-1)}$ implies $d_{B}(\xi_{t}) \geq \alpha 2^{-t}$ for all $t\geq 1$. Suppose therefore that $d_{B}(\xi_{t-1}) \geq \alpha 2^{-(t-1)}$, where $t\geq 1$. To estimate $d_{B}(\xi_{t})$, fix an arbitrary $\xi \in B$. Since $B$ is $\theta$-invariant,   we can write $\xi=\xi'/2+j/2$ with $\xi'\in B$ and $j\in\{0,1\}$.   If $j\neq x_{t}^+$, then $|\xi_{t}-\xi|\geq \delta$. (This is because $B  \subset C_\delta:=(0,1/2-\delta)\cup (1/2+\delta,1)$.)  In this case we immediately get $|\xi_{t}-\xi|\geq \alpha 2^{-t}$ from  the definition of $\alpha$.  If  $x_{t}^+=j$, then 
\begin{align*} 
|\xi_{t}-\xi|=|\frac{\xi_{t-1}}{2}+\frac{x_t^+}{2}-(\frac{\xi'}{2}+\frac{j}{2})|=|\xi_{t-1}-\xi'|/2\geq d_B(\xi_{t-1})/2.
\end{align*} 
Using that $d_B(\xi_{t-1})\geq \alpha 2^{-(t-1)}$, we   get $|\xi_{t}-\xi|\geq \alpha 2^{-t}$. Since $\xi \in B $ was arbitrary,   $d_{B}(\xi_{t}) \geq \alpha 2^{-t}$.
 
Now we prove  \eqref{eq: inequality Be t steps}.  Let $t\geq 1$. 
To estimate $d_{\Be}(\xi_t)$, let $\xi \in \Be$, so that (by  the definition of $\Be$) $\xi=\xi'/2+j/2$ for some $\xi'\in B$ and $j\in\{0,1\}$. If $j\neq x_t^+$, then  $|\xi_t-\xi|\geq \delta$. (This follows from that $\Be  \subset C_\delta$.) Thus $|\xi_t-\xi|\geq  \alpha 2^{-t}$ if $j\neq x_t^+$. If  $x_t^+=j$,  then
\begin{align*} 
|\xi_{t}-\xi|=|\frac{\xi_{t-1}}{2}+\frac{x_t^+}{2}-(\frac{\xi'}{2}+\frac{j}{2})|=|\xi_{t-1}-\xi'|/2\geq d_B(\xi_{t-1})/2,
\end{align*}  
so  $|\xi_t-\xi| \geq   \alpha 2^{-t}$. Since $\xi \in \Be$ was arbitrary, $d_{\Be}(\xi_t)\geq \alpha 2^{-t}$.  
\end{proof}

Let $B, N_e, \N0, N_{1/2}$, and $p$ be as in Step 1. Since $B$ has measure zero, we are done if we can show that $\mathbb{P}_p(\xi_t \to 0 \text{ or }1|\xi_0)=1$ for every $\xi_0 \in B^c$.  
Let $\xi_0$ be any point of  $B^c$. That $p\equiv 1$ on $\N0$ means that  if a sample path from $\xi_0$  reaches $\N0$, it goes to $0$ (if it reaches $[0,\varepsilon]$) or $1$ (if it reaches   $[1-\varepsilon,1]$). So it suffices to show that $\xi_t \in \N0$ for some $t$,  $\mathbb{P}_p(\hspace{0.5 mm}.\hspace{0.5 mm}\vert\xi_0)$-a.s. By  Borel-Cantelli,  we will have $\xi_t \in \N0$ for some (in fact infinitely many) $t$ if  
\begin{align}
\sum_{n=0}^\infty \mathbb{P}_p(\xi_{nk+k} \in \N0\vert \xi_{nk})=+\infty, \hspace{1 mm}  \mathbb{P}_p(\hspace{0.5 mm}.\hspace{0.5 mm}\vert \xi_0)\text{-a.s.} 
\label{c0}
\end{align}
We verify \eqref{c0} by showing that there is a constant $\lambda \in (0,1)$ and $a>0$ such that  
\begin{align}\label{eq: lambda inequality}
\mathbb{P}_p(\xi_{nk+k} \in \N0|\xi_{nk}) \geq   \frac{\lambda}{a+nk+k}
\end{align}  
for all $n \geq 1$, $\mathbb{P}_p(\hspace{0.5 mm}.\hspace{0.5 mm}\vert \xi_0)$-a.s.  

\noindent \emph{Case 1: $\xi_{nk} \leq 1/2$.} Then $\xi_{nk}/2^k \in \N0$ by our choice of $k$, so   
\begin{align*} 
\mathbb{P}_p(\xi_{nk+k} \in \N0\vert\xi_{nk}) &\geq \mathbb{P}_p(\xi_{nk+k}=\xi_{nk}/2^k\vert\xi_{nk})
=\prod_{i=1}^kp(\xi_{nk}/2^i).
\end{align*}
That $\xi_{nk} \leq 1/2$ implies  that $\xi_{nk}/2^i \leq 1/4$ for all $i \geq 1$. Since $\{\xi\colon p(\xi)=0\}=\overline{\Be} \cup N_{\frac{1}{2}}$ and $\overline{\Be}$ is in the interior of $N_e$, we can find $c \in (0,1)$ such that $p(\xi)\geq c$ for all  $\xi\in ([0,1] \backslash N_e)  \cap ([0,1/4]\cup [3/4,1])$. Then we have $$\prod_{i=1}^kp(\xi_{nk}/2^i) \geq c^k \text{ if }\xi_{nk}/2^i \notin  N_e  \text{ for }i=1,\dots,k.$$ To verify that  \eqref{eq: lambda inequality} holds, we need a lower bound on  $\prod_{i=1}^kp(\xi_{nk}/2^i)$ for the case when $\xi_{nk}/2^i \in N_e$ for at least one  $i \in \{1,\dots,k\}$. By Lemma \ref{lemma: xit approaches B and Be}, we can choose $\alpha_0>0$ so that $d_{\Be}(\xi_t(x^+))\geq  2^{-t-\alpha_0}$  for every sample path $\xi_t(x^+)$ from  $\xi_0$. For $i \in \{1,\dots,k\}$, take $x^+\in \{0,1\}^\mathbb{N}$ so that $\xi_{nk+i}(x^+)=\xi_{nk}/2^i$. (The first $nk$ entries of $x^+$ define the itinerary from $\xi_0$ to $\xi_{nk}$, and $x^+_{nk+j}=0$ for $j=1,\dots,i$.) If $\xi_{nk}/2^i \in N_e$,   the definition \eqref{def: p} of $p$ together with  \eqref{eq: inequality Be t steps} gives 
\begin{align*} 
p(\xi_{nk}/2^i) = |\log_2(d_{\Be}(\xi_{nk}/2^i))|^{-1/k}&= |\log_2(d_{\Be}(\xi_{nk+i}(x^+)))|^{-1/k} \\
&\geq |\log_2(2^{-(nk+i+\alpha_0)})|^{-1/k}\\
&= (\frac{1}{\alpha_0+nk+i})^{1/k}.
\end{align*}
 Letting $i_1,\dots,i_m$ be the  $m$ ($m\leq k$) 
 integers $i$ with  $\xi_{nk}/2^i  \in N_e$, 
\begin{align*}
 \prod_{i=1}^kp(\xi_{nk}/2^i) &\geq c^{k-m}\prod_{j=1}^{m}\frac{1}{(\alpha_0+nk+i_j)^{1/k}} \geq c^{k-m}\cdot\frac{1}{\alpha_0+nk+k}.
\end{align*}
 This shows that \eqref{eq: lambda inequality} holds with $a=\alpha_0$ and $\lambda=c^{k}$. 

\noindent \emph{Case 2: $\xi_{nk} > 1/2$.} If $\xi_{nk} > 1/2$, then $\N0$ can be reached by $k$ consecutive steps to the right:  $\xi_{nk+i}=\xi_{nk+i-1}/2+1/2$ for $i=1,\dots,k$. We then have $\xi_{nk+i} \geq 3/4$ and the above $c$ bounds $p(\xi_{nk+i})$  when $\xi_{nk+i} \in   ([0,1] \backslash N_e)  \cap ([0,1/4]\cup [3/4,1])$. Lemma \ref{lemma: xit approaches B and Be} and the argument in  Case 1 gives $p(\xi_{nk+i}) \geq (\alpha_0+nk+i)^{-1/k}$ when $\xi_{nk+i} \in N_e$. This means that \eqref{eq: lambda inequality} again holds with $a=\alpha_0$ and $\lambda=c^{k}$. 

Since $n\geq 1$ was arbitrary,  \eqref{eq: lambda inequality}  holds for all $n\geq 1$ with $a=\alpha_0(\xi_0)$ and $\lambda=c^{k}$. Hence, \eqref{c0} is satisfied.

\section{Concluding remarks}
\label{sect: conclude}
A QMF function $p(\xi), \xi \in \R$, generates a scaling function for a MRA if and only if  the infinite product
\begin{align}
\hat{\Phi}_p(\xi){:=}\prod_{j=1}^\infty p(\xi/2^j), \xi \in \mathbb{R},
\end{align}
satisfies  (see   \cite{Gun10} or \cite{HW96})
\begin{align}
&\text{   }\sum_{k \in \mathbb{Z}}\hat{\Phi}_p(\xi+k)=1 \mbox{ for a.e. $\xi \in [0,1]$,}\label{eq: condition (a)}
\\&\text{  } \lim_{j \rightarrow \infty} \hat{\Phi}_p(2^{-j}\xi)=1 \text{ for a.e. } \xi \in \mathbb{R}. \label{eq: condition (b)}
\end{align}
That \eqref{eq: condition (b)} holds for the $p$ that we constructed in the previous section follows from that this $p\equiv 1$ on an open interval containing 0. That the equality in \eqref{eq: condition (a)} holds almost everywhere  for this $p$ follows from that for every $p$ and \emph{every} $\xi_0\in [0,1]$, we have (see \cite{Gun10})  
\begin{align}
\sum_{k \in \mathbb{Z}} \hat{\Phi}_p(\xi_0+k)=\mathbb{P}_p(\xi_t \to 0 \text{ or }1|\xi_0). 
\label{eq: basic conclusion}
\end{align}

The discovery of continuous $p$ for which $\sum_{k \in \mathbb{Z}} \hat{\Phi}_p(\xi+k)=1$ fails on a set of measure zero was made in \cite{DGH00}. The notion of an inaccessible invariant set comes from \cite{Gun07}, where the example from \cite{DGH00} is included in a class of  such invariant sets obtained from subshifts of finite type. In this paper we have described their structure completely.

%%%%%%%%%%%%%%%%%%%%%%%%%%%%%%%%%%%%%%%%%%%%%%%%%%%%%%%%%%%%%%%%%%%
%%                                                               %%
%% Use the two commands below for producing your bibliography    %%
%% with bibtex, then comment again the commands and include the  %%
%% content of the .bbl file in this file below the commands.     %%
%%                                                               %%
%%%%%%%%%%%%%%%%%%%%%%%%%%%%%%%%%%%%%%%%%%%%%%%%%%%%%%%%%%%%%%%%%%%
\bibliographystyle{plain}
\bibliography{qmf_references}
\end{document}